\DeclareMathOperator{\Tr}{Tr}
\newtheorem{theorem}{Theorem}
\newtheorem{prop}{Proposition}
\newtheorem{lemma}{Lemma}
\theoremstyle{definition}
\newtheorem{defi}[theorem]{Definition}
\newtheorem{remark}[theorem]{Remark}
\newcommand{\cW}{\mathcal{W}_q}
\title{ A Projective Representation of the Modular Group}
\author{Nadav Kohen}
\author{Charles Frohman}                             
\address{ Department of Mathematics \\ The University of Iowa}
\email{nadav-kohen@uiowa.edu, charles-frohman@uiowa.edu}
\begin{document}                              

\maketitle

\begin{abstract}
Quantum Teichmuller theory assigns invariants to three-manifolds via projective representations of mapping class groups derived from the representation of a noncommutative torus. Here, we focus on a representation  of the simplest  non-commutative torus which remains fixed by all elements of the mapping class group of the torus, $SL_2(\mathbb{Z})$. Also known as the modular group.  We use this representation to associate a matrix to each element of $SL_2(\mathbb{Z})$; we then compute the trace and determinant of the associated matrix. 
\end{abstract}

\section{Introduction}

Fock and Chekhov \cite{FC} defined a noncommutative algebra related to the Teichmuller space of a punctured surface. The algebra is a noncommutative torus whose defining relations come from a triangulation of the underlying surface.  There is an action of the mapping class group on this algebra, and if a representation is fixed by this action, then this gives rise to a projective representation of the mapping class group.

In this paper, we study a toy model of quantum Teichmuller space, the noncommutative torus in two variables, $\cW$, which can be thought of as being associated to the two torus $T^2=S^1\times S^1$.
 The ``trivial'' represetantation of $\cW$ is fixed by all elements of the mapping class group, and so gives rise to a projective representation of the mapping class group of the torus, $SL_2(\mathbb{Z})$. In this paper we compute the trace, and the determinant of a matrix associated to each element of $SL_2(\mathbb{Z})$ by this projective representation in the case of $q$ being a root of unity of prime order.

 We begin with a section of preliminaries, starting with the definition of $\cW$ and a description of the action of $SL_2(\mathbb{Z})$ on $\cW$ as automorphisms.
 After reviewing properties of matrix algebras in subsection \ref{mat}, we will review  representations of algebras in subsection \ref{rep}. We finish the preliminaries with a review of Gauss sums.

 In section \ref{reptor} we give models of the irreducible representations of $\cW$ and prove they are indeed irreducible.
 
 Next, in section \ref{skno}, we take an arbitrary element $B\in SL_2(\mathbb{Z})$ and find a matrix whose action by conjugation is the same as $B$'s action as an automorphism on the representation of $\cW$. Finally in sections \ref{trcalc} and \ref{detcalc}, we calculate the trace and determinant (respectively) of the conjugating matrix.

\section{Preliminaries}
\subsection{Noncomutative Tori}\label{weyl}
Let $\cW = \mathbb{C}[l,l^{-1},m,m^{-1}]_q$ be the non-commuting Laurent polynomials in variables $l$ and $m$ with $lm = q^2ml$ for some $q\in\mathbb{C}\backslash \{0\}$. We will study $\cW$ using the following basis. Let $e_{r,s} = q^{-rs}l^rm^s$. The set $\{e_{r,s}\mid r,s\in\mathbb{Z}\}$ forms a basis for $\cW$ over $\mathbb{C}$ where we can take products of elements in this basis as follows,
\begin{align*} e_{p,t}*e_{r,s} &= q^{-pt-rs}l^pm^tl^rm^s\\
&= q^{-pt-rs}l^p(q^{-2tr}l^rm^t)m^s\\
&= q^{-pt-rs-2tr}q^{(p+r)(t+s)}q^{-(p+r)(t+s)}l^{p+r}m^{t+s}\\
&= q^{ps-rt}e_{p+r,t+s}\\
&= q^{\begin{vmatrix}
p & t\\
r & s
\end{vmatrix}}e_{p+r,t+s}
\end{align*}

Lastly, note that if $n$ is odd and $q$ is a primitive $n$th root of unity, then the center of $\cW$ (see Definition \ref{algdef}) is spanned by $\{e_{np,nt}\mid p,t\in\mathbb{Z}\}$. We can now view $\cW$ as a module over its center (see Definition \ref{repdef}).

\begin{prop} The algebra $\cW$ is free module over its center with rank $n^2$ and basis $\{e_{i,j}\mid 0\leq i,j<n\}$.
\end{prop}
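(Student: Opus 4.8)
The plan is to produce the module decomposition explicitly by applying the division algorithm to the exponents $(r,s)$, and then to deduce both spanning and freeness from the fact, already recorded in Subsection \ref{weyl}, that $\{e_{r,s}\mid r,s\in\mathbb{Z}\}$ is a $\mathbb{C}$-basis of $\cW$ together with the stated description of the center.

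For spanning, I would argue as follows. Given $e_{r,s}$, write $r=na+i$ and $s=nb+j$ with $a,b\in\mathbb{Z}$ and $0\le i,j<n$. Using the product formula $e_{p,t}*e_{r,s}=q^{ps-rt}e_{p+r,t+s}$ from Subsection \ref{weyl}, one gets $e_{na,nb}*e_{i,j}=q^{naj-nbi}e_{r,s}$, hence $e_{r,s}=q^{-n(aj-bi)}\,e_{na,nb}*e_{i,j}$. Since $e_{na,nb}$ lies in the center and a scalar multiple of a central element is again central, the coefficient $q^{-n(aj-bi)}e_{na,nb}$ belongs to the center; therefore $\{e_{i,j}\mid 0\le i,j<n\}$ generates $\cW$ as a module over its center. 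As every element of $\cW$ is a finite $\mathbb{C}$-linear combination of the $e_{r,s}$, this extends to all of $\cW$.

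For freeness, suppose $\sum_{0\le i,j<n} z_{i,j}e_{i,j}=0$ with each $z_{i,j}$ in the center. Each $z_{i,j}$ is a $\mathbb{C}$-linear combination of the central basis elements $e_{np,nt}$, so by the product formula $z_{i,j}e_{i,j}$ is a $\mathbb{C}$-linear combination of elements of the form $e_{np+i,\,nt+j}$ with $p,t\in\mathbb{Z}$. For distinct pairs $(i,j),(i',j')\in\{0,\dots,n-1\}^2$, the index sets $\{(np+i,nt+j)\}$ and $\{(np+i',nt+j')\}$ are disjoint, since they differ in residue mod $n$ in at least one coordinate. Because $\{e_{r,s}\}$ is a $\mathbb{C}$-basis, each summand $z_{i,j}e_{i,j}$ must vanish separately, and then comparing coefficients of the distinct basis vectors $e_{np+i,nt+j}$ (equivalently, using that $e_{i,j}=q^{-ij}l^im^j$ is invertible) forces each $z_{i,j}=0$. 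Hence the $n^2$ elements $e_{i,j}$ form a basis of $\cW$ over its center, and $\cW$ is free of rank $n^2$.

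I do not expect a genuine obstacle: the content is a normal-form statement for the $q$-torus at a root of unity, and the only things to watch are the bookkeeping of residues mod $n$ and the verification that $q^{-n(aj-bi)}e_{na,nb}$ is central, both of which are immediate from the description of the center given in the excerpt.
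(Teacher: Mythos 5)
Your proposal is correct and follows essentially the same route as the paper: spanning via the division algorithm on the exponents together with the product formula $e_{na,nb}*e_{i,j}=q^{n(aj-bi)}e_{r,s}$, and freeness by expanding the central coefficients in the basis $\{e_{np,nt}\}$ and observing that the resulting index pairs $(np+i,nt+j)$ are pairwise distinct across distinct residues $(i,j)$, so linear independence of the $\mathbb{C}$-basis forces all coefficients to vanish. No gaps.
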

\begin{proof}
It is clear that $\{e_{i,j}\mid 0\leq i,j<n\}$ spans $\cW$ over $Z(\cW)$ since for any $r,s\in\mathbb{Z}$ we can find $p$ and $t$ so that $r = pn + i$ and $s = tn + j$ with $0\leq i,j < n$ so that $e_{r,s} = q^{-\begin{vmatrix}pn & tn\\ i & j\end{vmatrix}}e_{pn,tn}*e_{i,j}$. To show that this set is linearly independent, first note that $\{e_{np,nt}\mid p,t\in\mathbb{Z}\}$ is linearly independent over $\mathbb{C}$ as it is a subset of a basis for $\cW$ over $\mathbb{C}$. Suppose that $\sum_{i,j=0}^{n-1}\alpha_{i,j}e_{i,j} = 0$ with $\alpha_{i,j}\in Z(\cW) = \langle e_{np,nt}\mid p,t\in\mathbb{Z}\rangle$. Write $\alpha_{i,j} = \sum_k\beta_{i,j}^ke_{np_{i,j}^k,nt_{i,j}^k}$ where $\beta_{i,j}^k\in\mathbb{C}$ and the choice of the $\beta_{i,j}^k$ is unique. Then
\begin{align*}
0 &= \sum_{i,j=0}^{n-1}\alpha_{i,j}e_{i,j}\\
&=  \sum_{i,j=0}^{n-1}\left(\sum_k\beta_{i,j}^ke_{np_{i,j}^k,nt_{i,j}^k}\right)e_{i,j}\\
&= \sum_{i,j=0}^{n-1}\sum_k\beta_{i,j}^kq^{\begin{vmatrix}np_{i,j}^k & nt_{i,j}^k\\i & j\end{vmatrix}}e_{np_{i,j}^k + i,nt_{i,j}^k + j}\\
&= \sum_{i,j=0}^{n-1}\sum_k\beta_{i,j}^ke_{np_{i,j}^k + i,nt_{i,j}^k + j}
\end{align*}
But this is a linear combination of unique elements in $\{e_{i,j}\mid i,j\in\mathbb{Z}\}$, our basis for $\cW$ over $\mathbb{C}$, and thus $\forall i,j,k$ it must be that $\beta_{i,j}^k = 0$ by the linear independence of our basis. Therefore, $\forall i,j\in\{0,\ldots,n-1\}$, $\alpha_{i,j} = 0$.
\end{proof}

\begin{prop} There is a left action of  $SL_2(\mathbb{Z})$ as automorphisms on  $\cW$ defined by $$\begin{pmatrix}
a & b\\
c & d
\end{pmatrix}e_{p,t} = e_{ap + bt,cp + dt}.$$
\end{prop}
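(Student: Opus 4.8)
The plan is to fix an element $B=\begin{pmatrix} a & b\\ c & d\end{pmatrix}\in SL_2(\mathbb{Z})$, define $\phi_B\colon\cW\to\cW$ as the unique $\mathbb{C}$-linear map with $\phi_B(e_{p,t})=e_{ap+bt,\,cp+dt}$ on basis elements, and verify that (i) $\phi_B$ is an algebra homomorphism, (ii) $\phi_B$ is bijective, and (iii) $\phi_I=\mathrm{id}$ and $\phi_{B_1B_2}=\phi_{B_1}\circ\phi_{B_2}$. Since $\{e_{p,t}\mid p,t\in\mathbb{Z}\}$ is a $\mathbb{C}$-basis and multiplication is $\mathbb{C}$-bilinear, every identity needs only to be checked on basis elements.

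For the homomorphism property I would use the product formula from subsection \ref{weyl}. On one hand $\phi_B(e_{p,t}*e_{r,s})=q^{\,ps-rt}\,e_{a(p+r)+b(t+s),\,c(p+r)+d(t+s)}$, and on the other hand $\phi_B(e_{p,t})*\phi_B(e_{r,s})=q^{D}\,e_{(ap+bt)+(ar+bs),\,(cp+dt)+(cr+ds)}$ where $D=\begin{vmatrix} ap+bt & cp+dt\\ ar+bs & cr+ds\end{vmatrix}$. The subscripts match automatically because $(x,y)\mapsto(ax+by,cx+dy)$ is additive, so the whole content is the exponent identity $D=ps-rt$. The key observation is that
$$\begin{pmatrix} ap+bt & cp+dt\\ ar+bs & cr+ds\end{pmatrix}=\begin{pmatrix} p & t\\ r & s\end{pmatrix}\begin{pmatrix} a & c\\ b & d\end{pmatrix}=\begin{pmatrix} p & t\\ r & s\end{pmatrix}B^{T},$$
so $D=(ps-rt)\det(B^{T})=(ps-rt)\det B=ps-rt$, using $\det B=1$. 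This is the single place where membership in $SL_2(\mathbb{Z})$ (rather than $GL_2(\mathbb{Z})$) is used, and it is the crux of the argument.

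For bijectivity, note that $(p,t)\mapsto(ap+bt,cp+dt)$ is the action of $B$ on $\mathbb{Z}^2$, which is invertible with inverse given by $B^{-1}\in SL_2(\mathbb{Z})$; hence $\phi_B$ permutes the basis $\{e_{p,t}\}$ and so is a linear isomorphism, and combined with the previous paragraph it is an algebra automorphism (its inverse being $\phi_{B^{-1}}$). Finally, $\phi_I$ fixes every basis vector, so $\phi_I=\mathrm{id}$, and for $B_1,B_2\in SL_2(\mathbb{Z})$ applying $\phi_{B_2}$ then $\phi_{B_1}$ to $e_{p,t}$ corresponds to applying $B_2$ then $B_1$ to the column vector $(p,t)^{T}$, i.e. to applying $B_1B_2$; since all maps in sight are linear this proves $\phi_{B_1}\circ\phi_{B_2}=\phi_{B_1B_2}$ on all of $\cW$, giving a left action. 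I do not expect a genuine obstacle; the only care needed is the bookkeeping in the $2\times2$ determinant identity above.
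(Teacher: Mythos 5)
Your proof is correct and rests on exactly the same key step as the paper's: the exponent identity is reduced to multiplicativity of the $2\times 2$ determinant together with $\det B=1$. You additionally spell out bijectivity and the left-action axioms, which the paper treats as immediate and omits (it verifies only the homomorphism property), so yours is just a slightly more complete rendering of the same argument.
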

\begin{proof}
We need only show that $\begin{pmatrix}
a & b\\
c & d
\end{pmatrix}(e_{p,t}*e_{r,s}) = \begin{pmatrix}
a & b\\
c & d
\end{pmatrix}e_{p,t} * \begin{pmatrix}
a & b\\
c & d
\end{pmatrix}e_{r,s}$. This can be accomplished through direct computation:
\begin{align*}
\begin{pmatrix}
a & b\\
c & d
\end{pmatrix}(e_{p,t} * e_{r,s}) &= q^{\begin{vmatrix}
p & t\\
r & s
\end{vmatrix}}\begin{pmatrix}
a & b\\
c & d
\end{pmatrix}e_{p+r,t+s}\\
&= q^{\begin{vmatrix}
p & t\\
r & s
\end{vmatrix}} e_{a(p+r) + b(t+s),c(p+r) + d(t+s)}\\
&= q^{\begin{vmatrix}a & b\\c & d\end{vmatrix}\cdot\begin{vmatrix}p & r\\t & s\end{vmatrix}}e_{ap + ar + bt + bs, cp + cr + dt + ds}\\
&= q^{\begin{vmatrix}ap + bt & ar + bs\\cp + dt & cr + ds\end{vmatrix}}e_{ap + bt + ar + bs, cp + dt + cr + ds}\\
&= e_{ap + bt,cp + dt} * e_{ar + bs,cr + ds}\\
&= \begin{pmatrix}
a & b\\
c & d
\end{pmatrix}e_{p,t} * \begin{pmatrix}
a & b\\
c & d
\end{pmatrix}e_{r,s}
\end{align*}
\end{proof}

\subsection{Matrix Algebras} \label{mat}
\begin{defi}\label{algdef} An \textbf{algebra} $A$ over a field $F$ is a vector space with an additional bilinear binary operation $\cdot:A\times A\rightarrow A$ usually called multiplication. We assume that the multiplication is associative, and there is a unit element.  The \textbf{center} of $A$, $Z(A)$ are those elements of $A$ that commute with all other elements, i.e. $Z(A) = \{x\in A\mid x\cdot a = a\cdot x\ \forall a\in A\}$.
\end{defi}
\begin{defi}\label{repdef} Given an algebra, $A$, a \textbf{left $A$-module} or \textbf{representation} of $A$ is a vector space $V$ over $\mathbb{C}$ along with a homomorphism $\rho:A\rightarrow End(V)$. We say the representation is \textbf{irreducible} if $\rho$ is onto.
\end{defi}

Let $M_n(\mathbb{C})$ be the algebra of $n\times n$-matrices with complex entries. There is a standard basis denoted $E_{i,j}$ of matrices that are zero except in the $(i,j)$-entry. In this paper the indices $i$ and $j$ run from $0$ to $n-1$.  It is well known that
\begin{equation*} E_{i,j}E_{k,l}=\delta^j_kE_{i,l} \end{equation*} where $\delta^j_k$ is the Kronecker delta. Note that the center of $M_n(\mathbb{C})$ is exactly all scalar multiples of the identity. We will also use the Skolem-Noether \cite{G}  theorem, which ensures that every automorphism of $M_n(\mathbb{C})$ is inner, i.e, if $\theta:M_n(\mathbb{C})\rightarrow M_n(\mathbb{C})$ is an automorphism, then there exists $C\in M_n(\mathbb{C})$, unique up to scalar multiples, so that for all $A\in M_n(\mathbb{C})$,
\begin{equation*} \theta(A)=C^{-1}AC. \end{equation*}

\begin{remark} After choosing a basis for the vector space $V$, $End(V)$ can be identified with $M_n(\mathbb{C})$. In this paper we treat representations as homomorphisms into $M_n(\mathbb{C})$. \end{remark}

\subsection{Irreducible Representations}\label{rep}

We begin this section, by showing that irreducible representations of an algebra, $A$, are determined, up to equivalence, by their kernels. Then we show that under certain circumstances (that will appear in Section \ref{reptor}), those kernels are fully determined by their intersections with $Z(A)$.

\begin{prop}\label{uniqueirr} If $A$ is an associative  algebra, and $\rho:A\rightarrow M_n(\mathbb{C})$ is irreducible with $I = \ker\rho$, then $\rho$ is completely determined by $I$.
\end{prop}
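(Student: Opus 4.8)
The plan is to interpret ``completely determined by $I$'' in the only sensible way, namely up to equivalence of representations, and prove: if $\rho_1,\rho_2:A\to M_n(\mathbb{C})$ are both irreducible with $\ker\rho_1=\ker\rho_2=I$, then there is an invertible $C\in M_n(\mathbb{C})$ with $\rho_2(a)=C^{-1}\rho_1(a)C$ for all $a\in A$.

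First I would use surjectivity together with the first isomorphism theorem. Since $\rho_1$ is onto $M_n(\mathbb{C})$ (this is the definition of irreducible adopted in Definition \ref{repdef}) with kernel $I$, it factors as an algebra \emph{isomorphism} $\bar\rho_1:A/I\to M_n(\mathbb{C})$; for the same reason $\rho_2$ factors as an isomorphism $\bar\rho_2:A/I\to M_n(\mathbb{C})$. Composing, $\theta:=\bar\rho_2\circ\bar\rho_1^{-1}$ is an automorphism of $M_n(\mathbb{C})$.

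Next I would invoke the Skolem--Noether theorem recalled in Subsection \ref{mat}: there is $C\in M_n(\mathbb{C})$, invertible and unique up to scalar, so that $\theta(X)=C^{-1}XC$ for every $X\in M_n(\mathbb{C})$. Taking $X=\rho_1(a)$ for an arbitrary $a\in A$, and using $\rho_2(a)=\bar\rho_2(a+I)=\theta\bigl(\bar\rho_1(a+I)\bigr)=\theta(\rho_1(a))$, we get $\rho_2(a)=C^{-1}\rho_1(a)C$, as desired.

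The argument is really just the alignment of three ingredients—surjectivity to obtain $A/I\cong M_n(\mathbb{C})$, the first isomorphism theorem to push both representations through $A/I$, and Skolem--Noether to realize the resulting automorphism of $M_n(\mathbb{C})$ as conjugation—so there is no serious obstacle. The only point that needs to be stated carefully is the meaning of the conclusion: $\rho$ is determined by $I$ only up to equivalence (equivalently, up to the choice of basis identifying $End(V)$ with $M_n(\mathbb{C})$), reflecting the fact that $C$ is itself unique only up to a nonzero scalar.
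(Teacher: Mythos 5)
Your proof is correct and follows essentially the same route as the paper: both form the automorphism $\rho_2\circ\rho_1^{-1}$ of $M_n(\mathbb{C})$ and apply Skolem--Noether to realize it as conjugation. Your version is slightly more careful in that you factor both maps through $A/I$ via the first isomorphism theorem so that the composite is genuinely well defined, whereas the paper writes $\rho_2\circ\rho_1^{-1}$ directly as a mild abuse of notation.
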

\begin{proof} Suppose $\rho_1,\rho_2:A\rightarrow M_n(\mathbb{C})$ both onto, such that $\ker\rho_1=\ker\rho_2$. Then there exists a well defined endomorphism on $M_n(\mathbb{C})$, $\rho_2\circ\rho_1^{-1}$. Therefore, by the Skolem-Noether theorem, this map must be inner so that there is some $C\in GL_n(\mathbb{C})$ such that $C\rho_1C^{-1} = \rho_2$. 
\end{proof}

\begin{prop}\label{amax} Let $A$ be an associative algebra that is a free module of rank $n^2$ over its center. Let $\rho:A\rightarrow M_n(\mathbb{C})$ be an irreducible representation, and let $I=ker(\rho)$, then $(I\cap Z(A))\cdot A = I$.
In particular, this means the kernel of $\rho$ is determined by its intersection with the center of $A$. \end{prop}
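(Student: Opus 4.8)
The plan is to exploit the freeness hypothesis to set up a "division-with-remainder" decomposition of $A$ relative to the ideal $(I \cap Z(A)) \cdot A$, and then use a dimension count over the field $Z(A)/(I\cap Z(A))$.

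First I would observe that $Z := Z(A)$ is a commutative algebra, $J := I \cap Z$ is an ideal of $Z$, and $Z/J$ is a field: indeed $\rho$ restricts to a homomorphism $Z \to Z(M_n(\mathbb{C})) = \mathbb{C}\cdot\mathrm{Id}$, whose kernel is exactly $J$, so $Z/J$ embeds in $\mathbb{C}$. (In the application $Z(\cW) \cong \mathbb{C}[x^{\pm 1}, y^{\pm 1}]$ and $J$ is a maximal ideal, consistent with this.) Let $K := (I\cap Z)\cdot A = J \cdot A$. Since $A$ is free over $Z$ with basis $b_1,\dots,b_{n^2}$ (here $b_k = e_{i,j}$, $0\le i,j<n$), the quotient $A/K = A \otimes_Z (Z/J)$ is a vector space over the field $Z/J$ of dimension exactly $n^2$, with basis the images of the $b_k$.

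Next, clearly $K \subseteq I$: every element of $J$ is in $I$, and $I$ is a two-sided ideal of $A$, so $J\cdot A \subseteq I$. Therefore $\rho$ descends to a surjection $\bar\rho : A/K \twoheadrightarrow M_n(\mathbb{C})$ of $(Z/J)$-algebras, where $M_n(\mathbb{C})$ is viewed as a $(Z/J)$-vector space via the embedding $Z/J \hookrightarrow \mathbb{C}$. Now count dimensions over $Z/J$: the source has dimension $n^2$, and the target has dimension $\dim_{Z/J} M_n(\mathbb{C}) = [\mathbb{C}:Z/J]\cdot n^2 \ge n^2$. A surjective linear map from an $n^2$-dimensional space onto a space of dimension at least $n^2$ forces both dimensions to equal $n^2$ and the map to be an isomorphism. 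Hence $\ker\bar\rho = 0$, which says precisely $I \subseteq K$, and combined with $K\subseteq I$ gives $I = K = (I\cap Z(A))\cdot A$.

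The main obstacle, and the step that needs care, is the claim that $A/K$ has $(Z/J)$-dimension exactly $n^2$ — i.e. that tensoring the free $Z$-module $A$ down to $Z/J$ genuinely collapses $K$ to zero without creating new relations. This follows formally from freeness: $A \cong Z^{\oplus n^2}$ as a $Z$-module, so $A/JA \cong (Z/J)^{\oplus n^2}$, and the images of the basis $\{e_{i,j}\}$ remain a $(Z/J)$-basis. One should also double-check the subtle point that $Z/J$ need not literally equal $\mathbb{C}$ (a priori it is only a subfield), so "dimension $n^2$ surjecting onto dimension $n^2$" must be phrased over $Z/J$; in the root-of-unity setting at hand $Z/J = \mathbb{C}$ anyway, so the inequality $[\mathbb{C}:Z/J]\ge 1$ is an equality and the argument is clean. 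Everything else is bookkeeping: $K\subseteq I$ is immediate, and the descent of $\rho$ is automatic once $K\subseteq I$ is known.
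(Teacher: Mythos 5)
Your proof is correct, and it reaches the conclusion by a genuinely different (though closely parallel) route from the paper's. The paper also starts from $J\cdot A\subseteq I$ and the maximality of $J:=I\cap Z(A)$, but then argues coefficient-by-coefficient: for each basis element $e_i$ it forms the ideal $J_i\subseteq Z(A)$ of possible $e_i$-coefficients of elements of $I$, observes $J\subseteq J_i\subsetneq Z(A)$ (properness resting on the linear independence of the $\rho(e_i)$, which holds because $n^2$ elements spanning the $n^2$-dimensional $M_n(\mathbb{C})$ must be independent), and concludes $J_i=J$ by maximality, so every coefficient of every element of $I$ already lies in $J$. You instead pass to the quotient $A/JA\cong (Z/J)^{\oplus n^2}$ and run a single global dimension count against the surjection onto $M_n(\mathbb{C})$; this packages the same two inputs (freeness of rank $n^2$, surjectivity onto an $n^2$-dimensional algebra) more cleanly and avoids the coefficient ideals entirely. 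The one step you should tighten is the claim that $Z/J$ is a field: a subring of $\mathbb{C}$ need not be a field, so ``$Z/J$ embeds in $\mathbb{C}$'' does not by itself give maximality of $J$. The correct statement is stronger and simpler: since $A$ is a unital $\mathbb{C}$-algebra and $\rho$ is $\mathbb{C}$-linear, $\rho(Z(A))$ contains $\mathbb{C}I_n$, while surjectivity of $\rho$ forces $\rho(Z(A))\subseteq Z(M_n(\mathbb{C}))=\mathbb{C}I_n$; hence $Z/J\cong\mathbb{C}$ exactly. With that identification your hedge about $[\mathbb{C}:Z/J]$ disappears and the count is precisely an $n^2$-dimensional space surjecting onto an $n^2$-dimensional one, forcing $\ker\bar\rho=0$ and $I=JA$ as you say.
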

\begin{proof}
Because $I$ is an ideal of $A$, it is clear that $(I\cap Z(A))\cdot A\subseteq I$, so it is sufficient show the other inclusion. Notice that $\rho\vert_{Z(A)}:Z(A)\rightarrow Z(M_n(\mathbb{C})) = \mathbb{C}I_n$ is a homomorphism from $Z(A)$ onto a field; consequently, $\ker(\rho\vert_{Z(A)}) = I\cap Z(A)$ is a maximal ideal of $Z(A)$. If $B=\{e_i\}_{i=1}^{n^2}$ is a basis for $A$ over $Z(A)$ (of order $n^2$), then $\rho(B)$ must be a spanning set of $M_n(\mathbb{C})$ since $\rho$ is onto. Therefore, the elements of $\rho(B)$ are linearly independent as they span an $n^2$-dimensional vector space. Let 
$$J_i:=\{z\in Z(A)\vert \exists ze_i + \sum_{j\neq i}z_je_j\in I\}.$$
This is an ideal of $Z(A)$ because the $e_i$ are a basis so that the expression $ze_i + \sum_{j\neq i}z_je_j$ is unique. Notice the fact that for all $i$, $I\cap Z(A)$ is contained in $J_i$ which implies $I\cap Z(A) = J_i$ since $I\cap Z(A)$ is maximal. As this is true for each $i$, we get that for any $i$ (since $J_{i_1} = J_{i_2}$), $I\subseteq J_i\cdot A$. Therefore, $(I\cap Z(A))\cdot A = I$.
\end{proof}

\subsection{ Legendre Symbols and Gauss Sums}
Lastly, we introduce Gauss Sums which will be instrumental in Section \ref{trcalc}.
\begin{defi}
Given two integers $a$ and $p$ such that $a\not\equiv 0\mod p$, then the associated \textbf{quadratic symbol}, or \textbf{Legendre symbol}, is defined to be
$$\left(\frac{a}{p}\right) = \left\{\begin{array}{ccc}
1 & \text{if} & a\equiv x^2\mod p\\
-1 & \text{if} & a\not\equiv x^2\mod p
\end{array}\right.$$ for any integer $x$. This value depends only on the residue class of $a\mod p$.
\end{defi}

\begin{defi} A \textbf{Gauss quadratic sum} is a sum of the form,
$$\sum_{x\mod b}e^{\frac{2\pi i}{b}ax^2} =: G(a,b)$$
where $a$ and $b$ are relatively prime, non-zero integers with $b>0$.
\end{defi}

Lang gives an exposition \cite{L} of Gauss' calculation showing that if $b\geq 1$ is odd, then $G(a,b) = \left(\frac{a}{b}\right)G(1,b)$, and that $G(1,b) = \frac{1 + i^{-b}}{1 + i^{-1}}\sqrt{b}$ so that for $b\geq 1$ odd, $$G(a,b) = \left(\frac{a}{b}\right)\frac{1 + i^{-b}}{1 + i^{-1}}\sqrt{b}.$$.

\section{Irreducible Representations of the Noncommutative Torus}\label{reptor}

We now describe representatives of every irreducible representation of $\cW$. Choose an $n$th root $b^{1/n}$ of $b$.  Define $\rho_{a,b}:\cW\rightarrow M_n(\mathbb{C})$ to be the representation of $\cW$ determined by $\rho_{a,b}(l) = L_a$ and $\rho_{a,b}(m) = M_b$ where $a,b\in\mathbb{C}$ and
$$L_a = \begin{pmatrix}
0 & 0 & \cdots & 0 & a\\
1 & 0 & \cdots & 0 & 0\\
0 & 1 & \cdots & 0 & 0\\
\vdots & \vdots & \ddots & \vdots & \vdots\\
0 & 0 & \cdots & 1 & 0
\end{pmatrix},\ M_b = \begin{pmatrix}
b^{\frac{1}{n}} & 0 & 0 & \cdots & 0\\
0 & b^{\frac{1}{n}}q^{-2} & 0 & \cdots & 0\\
0 & 0 & b^{\frac{1}{n}}q^{-4} & \cdots & 0\\
\vdots & \vdots & \vdots & \ddots & \vdots\\
0 & 0 & 0 & \cdots & b^{\frac{1}{n}}q^{-2(n-1)}
\end{pmatrix}$$

\begin{remark} Even though the matrix $M_b$ depends on the choice of $b^{1/n}$, the equivalence class of the representation does not. As this is a representation of an associative algebra that is free of rank $n^2$ over its center, by Proposition \ref{amax} the representation is determined by the intersection of its kernel with $Z(\cW)$. In this case that is the ideal $(l^n-a,m^n-b)$, which does not depend on the choice of $n$th root of $b$.\end{remark}

\begin{prop} The representation, $\rho_{a,b}:\cW\rightarrow M_n(\mathbb{C})$, is irreducible (surjective).
\end{prop}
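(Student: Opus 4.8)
The plan is to show that the image of $\rho_{a,b}$ is all of $M_n(\mathbb{C})$ by exhibiting enough matrices in the image to span the $n^2$-dimensional algebra $M_n(\mathbb{C})$. The natural candidates to produce are the matrix units $E_{i,j}$, or equivalently a set of $n^2$ matrices that are visibly linearly independent. First I would record the key structural feature of the two generating matrices: $L_a$ is (up to the corner entry $a$) a cyclic shift, so $L_a e_k = e_{k-1}$ for $k \geq 1$ and $L_a e_0 = a\, e_{n-1}$ in the standard basis, while $M_b$ is diagonal with distinct eigenvalues $b^{1/n} q^{-2k}$ for $k = 0, \ldots, n-1$ — distinct precisely because $q$ is a primitive $n$th root of unity, so the powers $q^{-2k}$ are distinct (here $n$ is odd, so $2$ is invertible mod $n$ and $q^2$ is also a primitive $n$th root of unity).

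The main step is then a Vandermonde-type argument. Since $M_b$ (or rather $M_b^j$ for $j = 0, \ldots, n-1$, or the scalar-normalized version $b^{-1/n}M_b$) is diagonal with $n$ distinct diagonal entries, the $n$ powers of this diagonal matrix are linearly independent and their span is exactly the algebra of all diagonal matrices; equivalently, each diagonal matrix unit $E_{k,k}$ lies in the image. Next I would conjugate, or rather multiply, by powers of $L_a$: because $L_a$ acts as a cyclic permutation of the basis vectors (up to nonzero scalars coming from $a$), the products $L_a^r E_{k,k}$ and $E_{k,k} L_a^{r}$ move a nonzero entry off the diagonal, and by taking the appropriate products $L_a^{\,i-k} E_{k,k}$ one obtains a nonzero scalar multiple of each matrix unit $E_{i,k}$. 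Since every $E_{i,k}$ is then in the image of $\rho_{a,b}$, the image contains a spanning set of $M_n(\mathbb{C})$, hence equals $M_n(\mathbb{C})$, so $\rho_{a,b}$ is onto.

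An alternative, slightly slicker route avoiding case analysis on the corner entry is to invoke Proposition \ref{amax} together with a dimension count: one checks that $(l^n - a, m^n - b)$ is contained in $\ker \rho_{a,b}$ (direct computation: $L_a^n = a I_n$ and $M_b^n = b I_n$), that the quotient $\cW / (l^n - a, m^n - b)\cW$ has dimension exactly $n^2$ over $\mathbb{C}$ by the freeness proposition, and that $\rho_{a,b}$ therefore factors through a map from an $n^2$-dimensional algebra to $M_n(\mathbb{C})$; surjectivity then follows if one knows the map is injective on the quotient, which reduces to showing $\ker \rho_{a,b}$ is no larger than $(l^n-a, m^n-b)\cW$. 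I expect the main obstacle to be handling the corner entry $a$ of $L_a$ cleanly in the direct approach — keeping track of which power of $a$ appears when $L_a^r$ wraps around — so I would either assume without loss of generality a convenient normalization or, more robustly, use the fact that $L_a$ is invertible (its determinant is $\pm a \neq 0$) so that left multiplication by $L_a^r$ is a bijection on $M_n(\mathbb{C})$, which immediately upgrades "the diagonal matrices are in the image" to "a full set of matrix units is in the image" without computing the scalars explicitly.
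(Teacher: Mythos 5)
Your main argument is correct and essentially the paper's: both proofs first extract diagonal matrix units from powers of $M_b$ (the paper via the explicit root-of-unity sum $\frac{1}{n}\sum_{i=0}^{n-1}(b^{-1/n}M_b)^i = E_{0,0}$, you via the equivalent Vandermonde observation that the $n$ distinct diagonal entries make the powers of $M_b$ span all diagonal matrices — correctly noting this needs $n$ odd so the $q^{-2k}$ are distinct), and then use the invertible cyclic shift $L_a$ to reach every matrix unit $E_{i,j}$. The only blemish is the reversed indexing $L_a e_k = e_{k-1}$ (it is in fact $e_{k+1}$ for $k<n-1$, with $L_a e_{n-1}=a\,e_0$), which does not affect the argument since all you use is that $L_a$ permutes the basis cyclically up to nonzero scalars.
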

\begin{proof}
Recall that $1 + q + q^2 + \cdots + q^{n-1} = \frac{q^n-1}{q-1} = 0$. We have that $\sum_{i=0}^{n-1}M_1^i = nE_{0,0}$, the matrix where every entry but the top left corner is zero.  Hence, $\frac{1}{n}\sum_{i=0}^{n-1}(b^{-\frac{1}{n}}M_b)^i = \frac{1}{n}\sum_{i=0}^{n-1}M_1^i = E_{0,0}$. Subsequently, $L_a^iE_{0,0} = E_{i,0}$ for $0\leq i\leq n-1$. Finally, $E_{i,0}(\frac{1}{a}L_a)L_a^{k-1} = E_{i,n-k}$ for $1\leq k\leq n-1$ so that $L_a,M_b$ together span every $E_{i,j}$ which spans all of $M_n(\mathbb{C})$. Therefore, $\rho_{a,b}$ is irreducible.
\end{proof}

\begin{prop}\label{fixedrep} If $B\in SL_2(\mathbb{Z})$, and $1$ is not an eigenvalue of $B$ the only representations of $\cW$ that can be fixed by $B$ are $\rho_{a,b}:\cW\rightarrow M_n(\mathbb{C})$ were $a$ and $b$ are roots of unity whose order divides the determinant of $B-I_2$. \end{prop}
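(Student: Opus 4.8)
The plan is to track what it means for the representation $\rho_{a,b}$ to be fixed by $B$, translated into the language of kernels via Proposition \ref{amax}. Since $\rho_{a,b}$ is irreducible and $\cW$ is free of rank $n^2$ over its center, the representation is determined (up to equivalence) by $\ker(\rho_{a,b})\cap Z(\cW)$, which as noted is the ideal $(l^n-a,\,m^n-b)$ of $Z(\cW)$. Saying that $B=\left(\begin{smallmatrix}\alpha&\beta\\\gamma&\delta\end{smallmatrix}\right)$ fixes $\rho_{a,b}$ up to equivalence means $\rho_{a,b}\circ B$ and $\rho_{a,b}$ have the same kernel, hence the same intersection with $Z(\cW)$; so the plan is to compute the action of $B$ on the central generators $l^n = e_{n,0}$ (up to a unit) and $m^n = e_{0,n}$ and compare the resulting ideal with $(l^n-a,m^n-b)$.

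First I would record that the action of $B$ on the center sends $e_{n,0}\mapsto e_{\alpha n,\gamma n}$ and $e_{0,n}\mapsto e_{\beta n,\delta n}$. On the quotient $Z(\cW)/(l^n-a,m^n-b)\cong \mathbb{C}$, the generator $e_{np,nt}$ (up to the scalar $q^{n^2\det(\ldots)}$, which is $1$ since $q^{n^2}=1$ when $q$ is a primitive $n$th root of unity with $n$ odd) evaluates to $a^p b^t$. So $B$ fixing the representation forces the two ideals $(l^n-a, m^n-b)$ and $(l^n - a^\alpha b^\gamma,\ m^n - a^\beta b^\delta)$ (up to roots of unity coming from the $n$th-root ambiguities, which I will need to handle carefully) to coincide, i.e. $a^\alpha b^\gamma = a$ and $a^\beta b^\delta = b$. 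Taking these two multiplicative relations and writing them additively in the exponents (after checking $a,b\neq 0$, which holds since $L_a, M_b$ must be invertible for a representation of $\cW$), I get $(B-I_2)$ applied to the ``logarithm vector'' of $(a,b)$ equals zero. Multiplying by the adjugate of $B-I_2$ then shows $\det(B-I_2)$ times each logarithm is $0$ modulo $2\pi i \mathbb{Z}$, which is exactly the statement that $a$ and $b$ are roots of unity whose order divides $\det(B-I_2)$.

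The step I expect to be the main obstacle is making the ``up to roots of unity'' bookkeeping precise: the matrices $L_a$ and $M_b$ involve a choice of $n$th root $b^{1/n}$, and the equivalence of representations only pins down the central ideal, so the naive equalities $a^\alpha b^\gamma = a$ etc.\ should really be derived at the level of the ideals $(l^n-a,m^n-b)$ rather than by manipulating the matrices directly. I would handle this by working entirely inside $Z(\cW)$: the maximal ideals of $Z(\cW)$ containing no zero divisors correspond to pairs $(a,b)\in(\mathbb{C}^\times)^2$ via evaluation $e_{np,nt}\mapsto a^p b^t$, the automorphism $B$ permutes these maximal ideals by $(a,b)\mapsto(a^\alpha b^\gamma, a^\beta b^\delta)$ (the $q$-power prefactor being trivial), and $B$ fixes $\rho_{a,b}$ iff it fixes the corresponding maximal ideal, giving the clean system $a^\alpha b^\gamma=a$, $a^\beta b^\delta=b$ with no residual ambiguity. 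From there the adjugate computation is routine: applying $\operatorname{adj}(B-I_2)$ and using $\det(B-I_2)=\alpha\delta-\beta\gamma-\alpha-\delta+1 = 2-\operatorname{Tr}(B)$ (nonzero exactly because $1$ is not an eigenvalue of $B$, so division is not an issue) yields $a^{\det(B-I_2)}=b^{\det(B-I_2)}=1$.
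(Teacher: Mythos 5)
Your proposal is correct and follows essentially the same route as the paper: both reduce, via Proposition \ref{amax}, to the condition that $B$ fix the central maximal ideal $(l^n-a,\,m^n-b)$, translate this into the multiplicative system $a^{\alpha-1}b^{\gamma}=1$, $a^{\beta}b^{\delta-1}=1$, and solve it by integer linear algebra using that $\det(B-I_2)\neq 0$. The only cosmetic difference is that you clear the system with the adjugate of $B-I_2$ where the paper uses determinant-one elementary operations; your write-up is, if anything, more explicit than the paper about why the fixed-ideal condition yields exactly that system.
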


\proof  Suppose that $B=\begin{pmatrix} a & b \\ c & d \end{pmatrix}$ and $\rho_{e,f}:\cW\rightarrow M_n(\mathbb{C})$ is fixed by $B$. Since irreducible representations of $\cW$ are determined by the intersection of their kernel with the center we are looking for $\lambda,\mu \in \mathbb{C}-\{0\}$ that solve
the system of equations 
\begin{equation*} \lambda^n=e , \ \mu^n=f \end{equation*} \begin{equation*} \lambda^{na}\mu^{nb}=e , \ \lambda^{nc}\mu^{nd}=f. \end{equation*}
Performing a multiplicative row operation we get,
\begin{equation*} \lambda^{n(a-1)}\mu^{nb}=1 , \ \lambda^{nc}\mu^{n(d-1)}=1.\end{equation*}
Using elementary operations of determinant $1$, done multiplicatively, we can make this system,
\begin{equation*} \lambda^{ne_1}=1, \ \mu^{ne_2}=1,\end{equation*} where $e_1e_2$ is equal to the determinant of $ \begin{pmatrix} a-1 & b \\ c & d-1 \end{pmatrix}$.
From this we see that $\lambda^n$ and $\mu^n$ are roots of unity whose order divides the determinant of $B-I_2$.

\qed

\begin{remark} For a particular $\begin{pmatrix} a & b \\c & d \end{pmatrix}$ there can be some fixed representations, that hold out promise of invariants of the mapping cylinders of those matrices.  In order to get a problem that we can solve uniformly we now restrict our attention to the representation $\rho_{1,1}:\cW\rightarrow M_n(\mathbb{C})$, which is fixed by all elements of $SL_2(\mathbb{Z})$. We are in fact studying a projective representation of the mapping class group, of the torus that should be related to the Witten-Reshetikhin-Turaev representations \cite{BWP}. \end{remark} 

\begin{remark}Proposition \ref{fixedrep} shows us that the only representation $\rho_{a,b}$ fixed by all of $SL_2(\mathbb{Z})$ is $\rho_{1,1}$. From now on, we will refer to $\rho_{1,1}$ as $\rho$, $L_1$ as $L$, and $M_1$ as $M$. \end{remark}

\section{Finding the Conjugating Matrix using the Skolem-Noether Theorem} \label{skno}
We now know that every $B = \begin{pmatrix}
a & b\\
c & d
\end{pmatrix}\in SL_2(\mathbb{Z})$ acts as automorphisms of $\cW$, and consequently on $Z(\cW)$. Thus, if any ideal $I\subseteq Z(\cW)$ is fixed by $B$, then $B$ induces an automorphism on $M_n(\mathbb{C})$ as inner automorphisms. Our goal in this section is to determine the conjugating matrix associated with the automorphism induced by $B$. We begin with the following crucial observation,
\begin{lemma}\label{conjcomp}
The first column of $CE_{k,0}C^{-1}$ is a constant multiple of the $k$th column of $C$.
\end{lemma}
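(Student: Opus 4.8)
The plan is to compute the first column of $CE_{k,0}C^{-1}$ directly in terms of the columns of $C$, using the fact that $E_{k,0}$ is an exceptionally simple matrix. First I would write $C$ in terms of its columns as $C = (c_0 \mid c_1 \mid \cdots \mid c_{n-1})$, where $c_j$ denotes the $j$th column (indexing from $0$ to $n-1$ as in the paper's convention). The key elementary observation is that right-multiplication by $E_{k,0}$ picks out a single column and shifts it: since $E_{k,0}$ has its only nonzero entry (a $1$) in position $(k,0)$, the product $C E_{k,0}$ is the matrix whose $0$th column equals $c_k$ and all of whose other columns are zero.

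Next I would left-multiply by $C^{-1}$. Since $C E_{k,0}$ has only its $0$th column nonzero, the product $C^{-1}(C E_{k,0})$ also has only its $0$th column nonzero, and that $0$th column is exactly $C^{-1} c_k$. So the first column of $C E_{k,0} C^{-1}$ — wait, one must be careful about the order: we want $C^{-1}$ on the right, i.e. $(C E_{k,0}) C^{-1}$. Here I would instead use the dual observation: right-multiplication of a matrix $D$ by $C^{-1}$ takes $\mathbb{C}$-linear combinations of the rows of $C^{-1}$ dictated by the entries of $D$. Since $D = C E_{k,0}$ has nonzero entries only in its $0$th column, namely the entries of $c_k$, the product $D C^{-1}$ has $i$th row equal to $(c_k)_i$ times the $0$th row of $C^{-1}$; equivalently, $D C^{-1} = c_k \, (\text{row}_0(C^{-1}))$, a rank-one matrix. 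Its first (i.e. $0$th) column is then $(\text{row}_0(C^{-1}))_0 \cdot c_k$, which is a scalar multiple of $c_k$, the $k$th column of $C$. That is precisely the claim, with the scalar being the $(0,0)$-entry of $C^{-1}$ (which is nonzero whenever $c_k \neq 0$, though the statement as phrased allows the degenerate reading where the multiple is zero).

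I do not anticipate a genuine obstacle here; this is a routine computation once one decomposes the matrices into rows and columns. The only thing requiring care is keeping the multiplication order straight — $C$ on the left and $C^{-1}$ on the right — and deciding which of the two factorizations ($C E_{k,0}$ kills all but one column, versus $E_{k,0} C^{-1}$ kills all but one row) to push through. I would present it via the rank-one factorization $C E_{k,0} C^{-1} = c_k \otimes r_0$ where $r_0$ is the top row of $C^{-1}$, from which reading off the first column is immediate. To state the scalar explicitly one could remark that it is $(C^{-1})_{0,0}$, but since the lemma only asserts proportionality I would keep the writeup short and not belabor this.
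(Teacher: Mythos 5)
Your argument is correct and is essentially the paper's own proof: the paper computes $ME_{k,l}N = (m_{i,k}n_{l,j})$ entrywise and reads off the first column, which is exactly your rank-one factorization $CE_{k,0}C^{-1} = c_k\,(\text{row}_0(C^{-1}))$ with scalar $(C^{-1})_{0,0}$. No substantive difference.
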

\begin{proof} Note that
$$ME_{k,l}N = (m_{i,j})E_{k,l}(n_{i,j}) = (m_{i,j})(\delta_{j}^kn_{l,j}) = (m_{i,k}n_{l,j})$$
where $\delta_{j}^k = \left\{\begin{array}{cc}
0 & k\neq j\\
1 & k = j
\end{array}\right.$. Specifically, the first column of $(m_{i,j})E_{k,0}(n_{i,j}) = (m_{i,k}n_{0,j})$ is $n_{0,0}\begin{pmatrix}
m_{1,k}\\
\vdots\\
m_{n,k}
\end{pmatrix}$, a constant multiple of the $k$th column of $M$.
\end{proof}
Thus, if we know what our automorphism does to the matrices $E_{k,0}$, then we can determine our conjugating matrix (which is only defined up to a scalar multiple).\\

The last thing we need in order to compute the conjugating matrix is the following lemma, which will be helpful in all future computations.
\begin{lemma}\label{ecompute} If $A = (a_{i,j})_{i,j=0}^{n-1}$, then $L^rM^sA = (q^{-2s(i-r)}a_{i-r,j})$ (where the index $i-r$ is taken modulo $n$).
\end{lemma}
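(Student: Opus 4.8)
The plan is to compute the product $L^rM^sA$ directly in terms of the matrix entries, using the explicit forms of $L = L_1$ and $M = M_1$ given in Section \ref{reptor}, and then simplify. First I would recall that $L = L_1$ is the cyclic permutation matrix whose only nonzero entries are $1$'s in positions $(i+1, i)$ for $0 \le i \le n-2$ together with a $1$ in position $(0, n-1)$; equivalently $L = \sum_{i} E_{i+1, i}$ with indices mod $n$, so that $L_{i,j} = \delta^{i-1}_{j}$ (indices mod $n$). By induction, $L^r$ has entries $(L^r)_{i,j} = \delta^{i-r}_{j}$, i.e. $L^r$ shifts the rows of a matrix down by $r$ (cyclically). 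Meanwhile $M = M_1$ is the diagonal matrix with $(M)_{i,i} = q^{-2i}$, so $M^s$ is diagonal with $(M^s)_{i,i} = q^{-2si}$.

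Next I would carry out the two multiplications in sequence. Acting on $A = (a_{i,j})$ on the left by $L^r$ permutes rows: the $(i,j)$ entry of $L^r A$ is $\sum_k (L^r)_{i,k} a_{k,j} = a_{i-r, j}$, where $i-r$ is read modulo $n$. Then multiplying on the left by $M^s$ scales the $i$th row by $q^{-2si}$, giving $(M^s L^r A)_{i,j} = q^{-2si} a_{i-r,j}$. The one subtlety is the order $L^r M^s$ versus $M^s L^r$: since $LM = q^2 ML$ in $\cW$ and $\rho$ is a homomorphism, $L M = q^2 M L$ as matrices, hence $L^r M^s = q^{2rs} M^s L^r$. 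Therefore $(L^r M^s A)_{i,j} = q^{2rs} q^{-2si} a_{i-r,j} = q^{-2s(i-r)} a_{i-r,j}$, which is exactly the claimed formula. Alternatively — and perhaps more cleanly — one can verify the commutation relation $L^r M^s = q^{2rs} M^s L^r$ directly from the defining relation, or just compute $M^s L^r$ acting on $A$ by hand and track the scalar; either way the exponent bookkeeping is the only thing to watch.

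I do not expect a genuine obstacle here; the statement is essentially a bookkeeping lemma. The main point requiring care is getting the sign and the $q$-power correct, specifically making sure the scaling factor attaches to the shifted index $i-r$ rather than to $i$, and that the cross-term $q^{2rs}$ from reordering $L^r$ past $M^s$ combines correctly with $q^{-2si}$ to produce $q^{-2s(i-r)}$. A secondary point worth stating explicitly is that all row indices are interpreted modulo $n$, since $L^n = I$ (recalling $L_1^n$ is the identity as $a = 1$), so the shift by $r$ is cyclic. With those conventions fixed, the computation is a two-line matrix multiplication.
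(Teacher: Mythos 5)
Your proof is correct and follows essentially the same direct entry-by-entry computation as the paper (row shift by $L^r$, diagonal scaling by $M^s$, indices mod $n$); the only difference is that the paper evaluates $L^r(M^sA)$ by applying $M^s$ first and then $L^r$, which yields $q^{-2s(i-r)}a_{i-r,j}$ immediately without invoking the commutation relation $L^rM^s=q^{2rs}M^sL^r$. Your detour through that relation is valid and the exponent bookkeeping lands on the same answer.
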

\begin{proof}
Note that $L$ is a permutation matrix so that $LA = (a_{i-1,j})$ where the index is taken modulo $n$. It is also clear that $MA = (q^{-2i}a_{i,j})$ since $M$ is diagonal. More generally, this means $L^rA = (a_{i-r,j})$ and $M^sA = (q^{-2is}a_{i,j})$ so that $L^rM^sA = (q^{-2s(i-r)}a_{i-r,j})$.
\end{proof}

\begin{theorem}\label{conjmat}
The conjugating matrix associated with $B$ is $$C = (c_{i,j})_{i,j=0}^{n-1} = (q^{-b^{-1}d(i-aj)^2 - 2cj(i-aj) - acj^2}).$$
\end{theorem}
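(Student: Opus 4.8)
The plan is to verify directly that conjugation by the matrix $C$ realizes the automorphism induced by $B = \begin{pmatrix} a & b \\ c & d\end{pmatrix}$, using Lemma \ref{conjcomp} to reduce the problem to a single column computation. By Lemma \ref{conjcomp}, the first column of $C E_{k,0} C^{-1}$ is a scalar multiple of the $k$th column of $C$. On the other hand, the automorphism of $M_n(\mathbb{C})$ induced by $B$ sends $E_{k,0}$ — which, via the representation $\rho$, is a specific element of the image of $\cW$ — to another explicit operator. So the strategy is: (1) identify $E_{k,0}$ as (a scalar multiple of) $\rho$ of a specific monomial $e_{r,s}$, or more precisely as a polynomial in $L$ and $M$; (2) apply $B$ to that expression to get the operator $\theta_B(E_{k,0})$; (3) read off the first column of $\theta_B(E_{k,0})$; (4) check that this first column is proportional to the $k$th column of the claimed $C$, i.e. to $(q^{-b^{-1}d(i-ak)^2 - 2ck(i-ak) - ack^2})_{i=0}^{n-1}$.

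For step (1), recall from the irreducibility proof that $E_{0,0} = \frac{1}{n}\sum_{i=0}^{n-1} M^i$ and $E_{k,0} = L^k E_{0,0} = \frac{1}{n}\sum_{i=0}^{n-1} L^k M^i$. Thus $E_{k,0} = \frac{1}{n}\sum_{i=0}^{n-1} \rho(e_{k,i})$ up to the $q$-power bookkeeping relating $e_{r,s}$ to $l^r m^s$; since $B$ acts on $\cW$ by $B e_{p,t} = e_{ap+bt, cp+dt}$ and $\rho$ is $B$-fixed, the induced automorphism $\theta_B$ of $M_n(\mathbb{C})$ satisfies $\theta_B(\rho(e_{p,t})) = \rho(e_{ap+bt,cp+dt})$. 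Applying this to $E_{k,0} = \frac{1}{n}\sum_i \rho(e_{k,i})$ gives $\theta_B(E_{k,0}) = \frac{1}{n}\sum_{i=0}^{n-1} \rho(e_{ak+bi,\, ck+di})$, which I then expand back into the $L^rM^s$ form and apply Lemma \ref{ecompute} to extract entries. The sum over $i$ of the relevant $q$-powers will be a geometric-type sum that collapses (because $\sum_{i} q^{2ni(\cdots)} $ vanishes unless the exponent is $0 \bmod n$), pinning down which entries of $\theta_B(E_{k,0})$ are nonzero and leaving a single Gaussian-type exponent in the surviving column. One must use $b$ invertible mod $n$ here (which holds since $\det(B)=1$ forces $\gcd(b,n)$ issues to be benign for the relevant $n$, or one inverts $b$ modulo $n$), to solve $bi \equiv -(k-aj)$ type congruences for the surviving index.

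The bulk of the work — and the main obstacle — is the exponent bookkeeping: tracking all the $q^{-rs}$ factors from $e_{r,s} = q^{-rs}l^rm^s$, the $q^{-2s(i-r)}$ factors from Lemma \ref{ecompute}, and the cross terms from multiplying out $L^{r_1}M^{s_1}L^{r_2}M^{s_2}$, then completing the square in the exponent to match the quadratic form $-b^{-1}d(i-aj)^2 - 2cj(i-aj) - acj^2$ appearing in $C$. I expect the determinant relation $ad - bc = 1$ to be needed precisely at the step where the accumulated exponent is simplified into that closed form (e.g. replacing $ad$ by $1+bc$). A secondary check is that $C$ is actually invertible — this follows because $C$ is the Skolem–Noether conjugating matrix, which exists and is unique up to scalar, so once conjugation by the claimed $C$ is shown to induce $\theta_B$ on a spanning set $\{E_{k,0}\}$ (together with, say, $\{E_{0,k}\}$ or enough of the $E_{i,j}$ to generate), it must be invertible and equal to the Skolem–Noether matrix up to scalar. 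Alternatively one exhibits $C^{-1}$ explicitly with the same formula for $B^{-1}$, but I would avoid that and instead lean on Lemma \ref{conjcomp} plus a dimension/rank argument. The cleanest write-up: fix $k$, compute column $0$ of $\theta_B(E_{k,0})$ via the collapsing sum, observe it is a nonzero scalar times $(q^{-b^{-1}d(i-ak)^2 - 2ck(i-ak)-ack^2})_i$ = the $k$th column of $C$, and invoke Lemma \ref{conjcomp} in reverse together with uniqueness in Skolem–Noether to conclude $\theta_B(X) = C X C^{-1}$ for all $X$.
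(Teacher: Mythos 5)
Your plan is essentially the paper's proof: it identifies $nE_{k,0}$ with $e_{k,0}\sum_{i}e_{0,i}$, applies the $B$-action in the $e_{r,s}$ basis, reads off the first column of the image, and invokes Lemma \ref{conjcomp} together with Skolem--Noether uniqueness to conclude that this column is the $k$th column of $C$. One small correction to the computation you anticipate: no collapsing geometric sum arises --- each term $e_{ak+bi,ck+di}$ contributes a single nonzero first-column entry in row $ak+bi$, and these rows are distinct because $i\mapsto ak+bi$ permutes $\mathbb{Z}/n\mathbb{Z}$; this is exactly where $b$ must be invertible modulo $n$, an assumption you rightly flag and which the paper also makes tacitly.
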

\begin{proof}
Because the sum $1 + q + q^2 + \cdots + q^{n-1} = \frac{q^n - 1}{q-1} = 0$, and since $n$ is prime, we have that $\sum_{i=0}^{n-1}M^i = \sum_{i=0}^{n-1}e_{0,i} = nE_{0,0}$, and from here, we can use $L = e_{1,0}$ to get our desired matrices (up to a scalar multiple): $$L^j\sum_{i=0}^{n-1}M^i = e_{j,0}\sum_{i=0}^{n-1}e_{0,i} = nE_{j,0}.$$

Now we can see where our automorphism sends these matrices to determine our conjugating matrix as in Lemma \ref{conjcomp}. The matrix $\begin{pmatrix}
a & b\\
c & d
\end{pmatrix}$ sends $e_{j,0}\sum_{i=0}^{n-1}e_{0,i}$ to $e_{aj,cj}\sum_{i=0}^{n-1}e_{bi,di}$ whose first column (indexed by $j=0$) is the first column of $\sum_{i=0}^{n-1}e_{bi,di}$, call this vector $v = \begin{pmatrix}
v_0\\
\vdots\\
v_{n-1}
\end{pmatrix}$. Since $e_{bi,di} = q^{-bdi^2}L^{bi}M^{di}$, and the first column of $M$ is $\begin{pmatrix}
1\\
0\\
\vdots\\
0
\end{pmatrix}$, we have that $v_{bi} = q^{-bdi^2}$ where the index is taken modulo $n$. Hence, $v_i = q^{-bd(b^{-1}i)^2} = q^{-b^{-1}di^2}$ where $b^{-1}$ is the multiplicative inverse of $b$ in $\mathbb{Z}/n\mathbb{Z}$. In general, the first column of $e_{aj,cj}\sum_{i=0}^{n-1}e_{bi,di}$ is $e_{aj,cj}v = q^{-acj^2}L^{aj}M^{cj}(q^{-b^{-1}di^2})_{i=0}^{n-1}$ from which we can finally write down our desired matrix by using Lemma \ref{ecompute}.
$$q^{-acj^2}\left[ L^{aj}M^{cj}(q^{-b^{-1}di^2})\right] = q^{-acj^2}(q^{-b^{-1}d(i-aj)^2}q^{-2cj(i-aj)}) = (q^{-b^{-1}d(i-aj)^2 - 2cj(i-aj) - acj^2}).$$
\end{proof}

\section{Trace Calculation}\label{trcalc}
We now compute the trace of the matrix found in Section \ref{skno}.
\begin{theorem}
Let $B = \begin{pmatrix}
a & b\\
c & d
\end{pmatrix}$. The matrix $C$ we computed in the last section, associated to $B$ has
\begin{equation*}  \Tr(C) = \left(\frac{K_B}{n}\right)\frac{1 + i^{-n}}{1 + i^{-1}}\sqrt{n} \end{equation*} where $K_B := -(b^{-1}d(1-a)^2 + c(2-a))$ \end{theorem}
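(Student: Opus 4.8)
The plan is to read off $\Tr(C)$ directly from the closed formula for $C=(c_{i,j})$ in Theorem \ref{conjmat}, observe that its diagonal entries collapse to a single Gaussian exponent, and then invoke the evaluation of quadratic Gauss sums recorded in the preliminaries.

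First I would compute the $(k,k)$ entry of $C$. Since $k-ak=(1-a)k$, each of the three terms in the exponent of $c_{k,k}=q^{-b^{-1}d(k-ak)^2-2ck(k-ak)-ack^2}$ carries a factor $k^2$, so
\begin{equation*}
c_{k,k}=q^{\,(-b^{-1}d(1-a)^2-2c(1-a)-ac)\,k^2}.
\end{equation*}
Using $-2c(1-a)-ac=-2c+ac=-c(2-a)$, the coefficient of $k^2$ is exactly $K_B=-\bigl(b^{-1}d(1-a)^2+c(2-a)\bigr)$, and hence
\begin{equation*}
\Tr(C)=\sum_{k=0}^{n-1}c_{k,k}=\sum_{k=0}^{n-1}q^{K_B k^2}.
\end{equation*}
Identifying the primitive $n$th root $q$ with $e^{2\pi i/n}$, the right-hand side is precisely the Gauss quadratic sum $G(K_B,n)$.

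Before applying Gauss's evaluation I must check its hypotheses. The modulus $n$ is an odd prime, and I need $\gcd(K_B,n)=1$; this follows from the identity $bK_B=\det(B-I_2)$. Indeed, using $ad-bc=1$,
\begin{equation*}
bK_B=-d(1-a)^2-bc(2-a)=-d(1-a)^2-(ad-1)(2-a)=2-a-d=\det(B-I_2),
\end{equation*}
and since $b$ is invertible modulo $n$ (which is what makes the formula for $C$ meaningful in the first place), $K_B\not\equiv 0\pmod n$ precisely when $n\nmid\det(B-I_2)$; under that standing hypothesis $\gcd(K_B,n)=1$. Then the quoted formula $G(a,b)=\left(\frac{a}{b}\right)\frac{1+i^{-b}}{1+i^{-1}}\sqrt{b}$ for odd $b\ge 1$, applied with $a=K_B$ and $b=n$, yields $\Tr(C)=\left(\frac{K_B}{n}\right)\frac{1+i^{-n}}{1+i^{-1}}\sqrt{n}$, as claimed.

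The computation is essentially routine; the two points needing care are the algebraic identity $bK_B=\det(B-I_2)$, which is what guarantees the Gauss sum is nondegenerate, and checking that the mixed diagonal term $-2cj(i-aj)$ really does collapse to a multiple of $k^2$ rather than leaving a linear-in-$k$ remainder. The one genuine subtlety is the choice of primitive root: for a general primitive $n$th root $q=e^{2\pi i r/n}$ the same argument produces $G(rK_B,n)$, which differs from the stated answer by the factor $\left(\frac{r}{n}\right)$, so one should either normalize $q=e^{2\pi i/n}$ at the outset or absorb this Legendre symbol into the statement.
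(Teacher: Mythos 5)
Your proof is correct and follows essentially the same route as the paper's: read the diagonal entries off the formula from Theorem \ref{conjmat}, collapse the exponent to $K_B k^2$, and apply the quoted evaluation of $G(K_B,n)$. The two hypotheses you flag --- that $\gcd(K_B,n)=1$ (via the identity $bK_B=\det(B-I_2)=2-a-d$) and that $q$ must be normalized to $e^{2\pi i/n}$ lest the answer acquire an extra Legendre symbol $\left(\frac{r}{n}\right)$ --- are genuine conditions that the paper's proof leaves unaddressed, so your version is the more careful one.
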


\proof Above we see that the elements along the diagonal of our conjugating matrix are of the form $c_{i,i} = q^{-b^{-1}d(i-ai)^2 - 2ci(i-ai) - aci^2} = q^{-i^2(b^{-1}d(1-a)^2 + 2c(1-a) + ac)} = q^{-i^2(b^{-1}d(1-a)^2 + c(2-a))}$. Define $K_B := -(b^{-1}d(1-a)^2 + c(2-a))$. Then we have that $\Tr(C) = \sum_{i=0}^{n-1}q^{i^2K_B}$ and this is just the Gauss sum, $G(K_B,n) = \left(\frac{K_B}{n}\right)G(1,n) = \left(\frac{K_B}{n}\right)\frac{1 + i^{-n}}{1 + i^{-1}}\sqrt{n}$ where $\left(\frac{K_B}{n}\right)$ is the Legendre symbol of $K_B$ with respect to $n$.\qed

\section{Determinant Calculation}\label{detcalc}
We now wish to calculate the determinant of $C$, which would be impractical to find through direct calculation and so I will use the following,
\begin{prop}$CC^* = nI_n$ where $C^*$ is the conjugate transpose of $C$.
\end{prop}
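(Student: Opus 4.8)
The plan is to compute the $(i,k)$ entry of $CC^*$ directly from the formula $c_{i,j}=q^{-b^{-1}d(i-aj)^2-2cj(i-aj)-acj^2}$ of Theorem \ref{conjmat} and to show it equals $n\delta_{i,k}$. Since $q$ is a root of unity, $\overline{q}=q^{-1}$, so $\overline{c_{k,j}}=q^{\,b^{-1}d(k-aj)^2+2cj(k-aj)+acj^2}$, and therefore
$$(CC^*)_{i,k}=\sum_{j=0}^{n-1}c_{i,j}\,\overline{c_{k,j}}=\sum_{j=0}^{n-1}q^{\,E(i,k,j)},$$
where $E(i,k,j)$ is the sum of the two exponents. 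The first step is to simplify $E(i,k,j)$.

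Expanding and collecting terms, the $\pm acj^2$ contributions cancel, and the difference of squares $(k-aj)^2-(i-aj)^2=(k-i)(k+i-2aj)$ together with $(k-aj)-(i-aj)=k-i$ gives
$$E(i,k,j)=(i-k)\bigl(-b^{-1}d(i+k)+2(ab^{-1}d-c)j\bigr).$$
Here I use $\det B=ad-bc=1$: multiplying $ad=1+bc$ by $b^{-1}$ yields $ab^{-1}d=b^{-1}+c$, hence $ab^{-1}d-c=b^{-1}$. Thus
$$E(i,k,j)=b^{-1}(i-k)\bigl(2j-d(i+k)\bigr),\qquad\text{so}\qquad (CC^*)_{i,k}=q^{-b^{-1}d(i-k)(i+k)}\sum_{j=0}^{n-1}q^{\,2b^{-1}(i-k)j}.$$

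The second step evaluates $S:=\sum_{j=0}^{n-1}q^{2b^{-1}(i-k)j}$. If $i=k$, the summand is $1$, the prefactor is $q^0=1$, and the entry is $n$. If $i\neq k$, then since $n$ is an odd prime and both $2$ and $b$ are invertible modulo $n$, we have $2b^{-1}(i-k)\not\equiv0\pmod n$, so $r:=q^{2b^{-1}(i-k)}\neq1$; then $S=(r^n-1)/(r-1)=0$ because $r^n=(q^n)^{2b^{-1}(i-k)}=1$ (the prefactor is irrelevant in this case). Hence $(CC^*)_{i,k}=n\delta_{i,k}$, i.e. $CC^*=nI_n$.

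There is no real obstacle here; the argument is bookkeeping. The only point needing care is the exponent simplification — in particular invoking $ad-bc=1$ at the right moment to collapse $ab^{-1}d-c$ to $b^{-1}$ — and observing that $b^{-1}$ is meaningful because we work in the regime where $b$ is invertible modulo the prime $n$, consistent with the use of $b^{-1}$ in Theorem \ref{conjmat}.
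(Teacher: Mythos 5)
Your proof is correct and follows essentially the same route as the paper: a direct entrywise computation of $CC^*$ in which the diagonal entries give $n$ and the off-diagonal entries reduce to a geometric sum of a nontrivial $n$th root of unity, which vanishes. If anything, you are slightly more careful than the paper, since you explicitly verify via $ad-bc=1$ that the coefficient of the summation variable in the exponent is $2b^{-1}(i-k)\not\equiv 0 \pmod n$, a point the paper's proof leaves implicit.
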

\begin{proof}
Let $v = (q^{-b^{-1}di^2})_{i=0}^{n-1}$ as above. I have shown that the $j$th column of $C$ is $e_{aj,cj}v = (e_{a,c})^jv$, that is, $C = (v\ Av\ A^2v\ \cdots\ A^{n-1}v)$ where $A:=e_{a,c}$. Hence, $$CC^* = (v\ Av\ A^2v\ \cdots\ A^{n-1}v)\begin{pmatrix}
v^*\\
v^*A^*\\
v^*(A^*)^2\\
\vdots\\
v^*(A^*)^{n-1}
\end{pmatrix} = \sum_{k=0}^{n-1}A^kvv^*(A^*)^k.$$
We know that $v^*=(q^{b^{-1}dj^2})$ and so $vv^*$ is the matrix $(q^{b^{-1}d(j^2-i^2)})$. By definition, $A^k = q^{-ack^2}L^{ak}M^{ck}$ so that $A^kvv^* = (q^{-ack^2}q^{-2ck(i-ak)}q^{b^{-1}d(j^2-(i-ak)^2)})$ by Lemma \ref{ecompute}. Finally, consider $(A^*)^k = q^{ack^2}(M^*)^{ck}(L^*)^{ak}$. To compute this, we will need an analagous statement to Lemma \ref{ecompute} but for $L^*$ and $M^*$.
\begin{lemma} If $A = (a_{i,j})_{i,j=0}^{n-1}$, then $A(M^*)^s(L^*)^r = (q^{2s(j-r)}a_{i,j-r})$ (where the index $j-r$ is taken modulo $n$).
\end{lemma}
\begin{proof}
Note $L^*$ is a (column) permutation matrix so that $(a_{i,j})L^* = (a_{i,j-1})$, and note $(a_{i,j})M^* = (q^{2j}a_{i,j})$ so that in general $(a_{i,j})(L^rM^s)^* = (a_{i,j})(M^*)^s(L^*)^r = (q^{2s(j-r)}a_{i,j-r})$.
\end{proof}
 
We can now use this to calculate,
\begin{align*}
A^kvv^*(A^*)^k &= (q^{ack^2}q^{2ck(j-ak)}q^{-ack^2}q^{-2ck(i-ak)}q^{b^{-1}d((j-ak)^2-(i-ak)^2)})\\
&= (q^{2ck((j-ak)-(i-ak)) + b^{-1}d((j-ak)^2 - (i-ak)^2)})\\
&= (q^{2ck((j-ak)-(i-ak)) + b^{-1}d(j^2 - i^2 - 2ak(j-i))})
\end{align*}
Therefore, $CC^* = (\sum_{k=0}^{n-1}q^{2ck((j-ak)-(i-ak)) + b^{-1}d(j^2 - i^2 - 2ak(j-i))})$ and it is clear that if $i=j$, then the above becomes $\sum_{k=0}^{n-1}q^0=n$. If $i\neq j$, then the exponent of $q$ is linear in $k$ and since $n$ is prime and $q$ is a primitive $n$th root of unity, we have that we will get a different root of unity for each $k$ so that we get the sum $\sum_{k=0}^{n-1}q^k = 0$. Therefore, $CC^* = nI_n$.
\end{proof}
Using this fact, we can conclude,
\begin{theorem} $\det(C) = \pm\sqrt{n}^n$. \end{theorem}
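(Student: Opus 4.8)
The plan is to avoid evaluating $\det(C)$ directly and instead read it off from the identity $CC^* = nI_n$ proved just above, which already encodes the modulus of the answer. Taking determinants of both sides gives $\det(C)\det(C^*) = \det(nI_n) = n^n$. Since $C^* = \overline{C}^{\,T}$ and the determinant is invariant under transposition, $\det(C^*) = \overline{\det(C)}$, so the identity collapses to $|\det(C)|^2 = n^n$, that is $|\det(C)| = \sqrt{n}^{\,n}$. This locates $\det(C)$ on the circle of radius $\sqrt{n}^{\,n}$ and is the inexpensive half of the statement.

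The real content is the assertion that this value is \emph{real}, i.e. that the unimodular phase of $\det(C)$ is trivial, and I would attack this by computing $\det(C)$ precisely rather than conceding the phase. The structured route uses the description $C = (v \mid Av \mid \cdots \mid A^{n-1}v)$ with $A = e_{a,c}$ from the previous proof. Since $A^n = I_n$ and, for $n$ prime, the eigenvalues of $A$ are exactly the $n$th roots of unity each with multiplicity one, diagonalizing $A = PDP^{-1}$ and writing $v = Pw$ factors $C = P\,\mathrm{diag}(w)\,V$, where $V$ is the Vandermonde matrix in the eigenvalues of $A$. This reduces $\det(C)$ to the product of $\det(P)$, the entries of $w$, and a Vandermonde determinant of roots of unity, each of which is a Gauss-type quantity that can be evaluated in closed form. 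A lighter alternative, if it can be arranged, is to exhibit signed permutation (and diagonal) matrices realizing $\overline{C} = P_1 C P_2$, which would force $\overline{\det(C)} = \pm\det(C)$ and hence reality immediately.

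I expect this phase/reality step to be the main obstacle, precisely because the relation $CC^* = nI_n$ is insensitive to the argument of $\det(C)$, so the sign must come from a genuinely finer computation. Concretely, the Vandermonde evaluation will produce a factor of the same flavour as the Gauss sum $G(1,n) = \frac{1 + i^{-n}}{1 + i^{-1}}\sqrt{n}$ that governs the trace, whose argument depends on $n \bmod 4$ and on the entries of $B$; the heart of the argument is to check that for the normalization of $C$ fixed in Theorem \ref{conjmat} all of these phases collapse to $\pm 1$. I would therefore treat the explicit identification of this phase, and the verification that it is real, as the crux of the proof, with the modulus bound above serving only as the skeleton.
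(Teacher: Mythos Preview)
Your modulus computation via $CC^* = nI_n$ is exactly what the paper does, and in fact is \emph{all} the paper does: its entire proof reads ``$\det(CC^*) = \det(C)\det(C^*) = \det(C)^2 = n^n \Rightarrow \det(C) = \pm\sqrt{n}^n$.'' The step $\det(C)\det(C^*) = \det(C)^2$ silently replaces $\overline{\det(C)}$ by $\det(C)$, i.e.\ presumes $\det(C)\in\mathbb{R}$ without argument, which is precisely the gap you flagged. So you have already reproduced the paper's argument and gone further by isolating the missing phase step.

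Your caution about that phase is in fact warranted: the statement, read literally, is false. Take $n=3$, $q=e^{2\pi i/3}$, and $B=\begin{pmatrix}0&-1\\1&0\end{pmatrix}$, so that $a=0$, $b=-1$, $c=1$, $d=0$. The formula of Theorem~\ref{conjmat} collapses to $c_{i,j}=q^{-2ij}$, the $3\times 3$ discrete Fourier matrix, and one computes directly $\det(C)=3(q^{2}-q)=-3\sqrt{3}\,i$. Thus $|\det(C)|=\sqrt{3}^{\,3}$, as the identity $CC^*=nI_n$ forces, but $\det(C)$ is purely imaginary, not $\pm\sqrt{3}^{\,3}$. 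The honest conclusion from the paper's argument is only $|\det(C)|=\sqrt{n}^{\,n}$; your Vandermonde/Gauss-sum plan is exactly the right route if one wants the actual phase, and carrying it out shows that this phase is a fourth root of unity depending on $n\bmod 4$ and on the entries of $B$, not always $\pm 1$.
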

\begin{proof}
$\det(CC^*) = \det(C)\det(C^*) = \det(C)^2 = n^n\Rightarrow \det(C) = \pm\sqrt{n}^n$.
\end{proof}

Therefore, $$\frac{\Tr(C)}{\sqrt[n]{\det(C)}} = \pm\left(\frac{K_B}{n}\right)\frac{1+i^{-n}}{1 + i^{-1}}$$
Although this should have to do with the Witten-Reshetikhin-Turaev invariant of the mapping cylinder of the corresponding element of the mapping class group \cite{BWP}. We  inspected  these invariants as computed by Lisa Jeffrey \cite{J}, but do not personally see the connection.

\end{document}